\numberwithin{equation}{section}
\theoremstyle{plain}
\newtheorem{thm}{Theorem}[section]
\newtheorem{rem}{Remark}[section]
\newtheorem{cor}{Corollary}[section]
\newtheorem{lem}{Lemma}[section]
\newcommand{\dd}{\: \mathrm{d}}
\newcommand{\dE}{\mathbb{E}}
\newcommand{\cA}{\mathcal{A}}
\newcommand{\cF}{\mathcal{F}}
\newcommand{\cH}{\mathcal{H}}
\newcommand{\cL}{\mathcal{L}}
\newcommand{\cN}{\mathcal{N}}
\newcommand{\cR}{\mathcal{R}}
\newcommand{\cS}{\mathcal{S}}
\newcommand{\cT}{\mathcal{T}}
\newcommand{\cW}{\mathcal{W}}
\newcommand{\wh}{\widehat}
\newcommand{\estt}{\wh{\theta}_{T}}
\newcommand{\estr}{\wh{\rho}_{T}}
\newcommand{\estdw}{\wh{D}_{T}}
\newcommand{\estz}{\wh{Z}_{T}}
\newcommand{\whv}{\wh{V}}
\newcommand{\whvc}{\whv^{\: 2}}
\newcommand{\hsp}{\hspace{0.5cm}}
\newcommand{\cvgps}{\hspace{0.3cm} \text{a.s.}}
\newcommand{\limT}{\lim_{T \rightarrow \, \infty}}
\font\calcal=cmsy10 scaled\magstep1
\def\build#1_#2^#3{\mathrel{\mathop{\kern 0pt#1}\limits_{#2}^{#3}}}
\def\liml{\build{\longrightarrow}_{}^{{\mbox{\calcal L}}}}
\def\limp{\build{\longrightarrow}_{}^{{\mbox{\calcal P}}}}
\def\videbox{\mathbin{\vbox{\hrule\hbox{\vrule height1ex \kern.5em
\vrule height1ex}\hrule}}}
\email{Bernard.Bercu@math.u-bordeaux1.fr}
\email{Frederic.Proia@inria.fr}
\email{Nicolas.Savy@math.univ-toulouse.fr}
\keywords{Ornstein-Uhlenbeck process, Maximum likelihood estimation, Continuous-time Durbin-Watson statistic, Almost sure convergence, 
Asymptotic normality}
\begin{document}

\title[On Ornstein-Uhlenbeck driven by Ornstein-Uhlenbeck processes]
{On Ornstein-Uhlenbeck driven by Ornstein-Uhlenbeck processes
\vspace{2ex}}
\author{Bernard Bercu}
\address{Universit\'e Bordeaux 1, Institut de Math\'ematiques de Bordeaux,
UMR 5251, and INRIA Bordeaux Sud-Ouest, team ALEA, 351 Cours de la Lib\'eration, 33405 Talence cedex, France.}
\author{Fr\'ed\'eric Proia}
\address{\vspace{-0.8cm}}
\author{Nicolas Savy}
\address{Universit\'e Paul Sabatier, Institut de Math\'ematiques de Toulouse,
UMR C5583, 31062 Toulouse Cedex 09, France.}

\thanks{}

\begin{abstract}
%\begin{center}
We investigate the asymptotic behavior of the maximum likelihood estimators of the unknown parameters of
positive recurrent Ornstein-Uhlenbeck processes driven by Ornstein-Uhlenbeck processes.
%\end{center}
\end{abstract}

\maketitle

\vspace{-0.5cm}

%%%%%%%%%%%%%%%%%%%%%%%%%%%%%%%%%%%%%%%%%%%%%%%%%%%%%%%%%%%%%%%%%%%%%%%%%%%%%%%%%%%%%%%%%%%%

\section{INTRODUCTION AND MOTIVATION}

%%%%%%%%%%%%%%%%%%%%%%%%%%%%%%%%%%%%%%%%%%%%%%%%%%%%%%%%%%%%%%%%%%%%%%%%%%%%%%%%%%%%%%%%%%%%

Since the seminal work of Ornstein and Uhlenbeck \cite{OU30}, a wide literature is available on Ornstein-Uhlenbeck processes
driven by Brownian or fractional Brownian motions \cite{K04}, \cite{LS01}. Many interesting papers are also available on Ornstein-Uhlenbeck processes
driven by L\'evy processes
\begin{equation}  
\label{OUL}
\dd X_{t} =  \theta X_{t} \dd t  +  \dd L_{t}
\end{equation}
where $\theta<0$ and $(L_{t})$ is a 
%right continuous with left limits 
continuous-time stochastic process
starting from zero with stationary and independent increments. We refer the reader to 
Barndorff-Nielsen and Shephard \cite{BNS01} for the mathematical foundation on Ornstein-Uhlenbeck processes
driven by L\'evy processes, and also to \cite{BN11} for a recent extension 
to fractional L\'evy processes. Parametric estimation results for Ornstein-Uhlenbeck driven
by $\alpha$-stable L\'evy processes are established in \cite{HL07} whereas nonparametric estimation results
are given in \cite{JVDV05}. Two interesting applications related to money exchange rates and stock prices
may be found in \cite{BNS01} and \cite{ON08}, see also the references therein.
\ \vspace{1ex} \par
To the best of our knowledge, no results are available on Ornstein-Uhlenbeck driven by Ornstein-Uhlenbeck processes
defined, over the time interval $[0,T]$, by
\vspace{0.1cm}
\begin{equation}
\label{MOD}
\left\{
\begin{array}[c]{ccccl}
\dd X_{t} & = & \theta X_{t} \dd t & + & \dd V_{t} \vspace{1ex}\\
\dd V_{t} & = & \rho V_{t} \dd t & + & \dd W_{t} 
\end{array}
\right.
\end{equation}
where $\theta < 0$, $ \rho \leq 0$ and $(W_{t})$ is a standard Brownian motion. 
For the sake of simplicity and without loss of generality, we choose the initial values
$X_0 = 0$ and $V_0 = 0$. Our motivation for studying \eqref{MOD}
comes from two observations. On the one hand, the increments of Ornstein-Uhlenbeck processes are not independent
which means that the weighted maximum likelihood estimation approach of \cite{HL07} does not apply directly to our situation.
On the other hand, Ornstein-Uhlenbeck driven by Ornstein-Uhlenbeck processes are clearly related with
stochastic volatility models in financial mathematics \cite{S00}. Furthermore, \eqref{MOD} is the continuous-time 
version of the first-order stable autoregressive process driven by a first-order autoregressive process recently investigated
in \cite{BercuProia11}. 
\ \vspace{1ex} \par
The paper organizes as follows. Section \ref{SC-MLE} is devoted to the maximum likelihood estimation
for $\theta$ and $ \rho$. We also introduce the continuous-time Durbin-Watson statistic which will allow us 
to propose a serial correlation test for Ornstein-Uhlenbeck driven by Ornstein-Uhlenbeck processes.
In Section \ref{SC-MR}, we establish the almost sure convergence as well as the asymptotic normality of our estimates. 
One shall realize that there is a radically different behavior of the estimator of $\rho$ in the two situations where $\rho< 0$ and $\rho=0$.
Our analysis relies on technical tools postponed to Section \ref{SC-TOOLS}. Finally, in Section \ref{SC-DW}, we propose a statistical procedure
based on the continuous-time Durbin-Watson statistic, in order to test whether or not $\rho=0$.

%%%%%%%%%%%%%%%%%%%%%%%%%%%%%%%%%%%%%%%%%%%%%%%%%%%%%%%%%%%%%%%%%%%%%%%%%%%%%%%%%%%%%%%%%%%%

\section{MAXIMUM LIKELIHOOD ESTIMATION}

%%%%%%%%%%%%%%%%%%%%%%%%%%%%%%%%%%%%%%%%%%%%%%%%%%%%%%%%%%%%%%%%%%%%%%%%%%%%%%%%%%%%%%%%%%%%
\label{SC-MLE}

The maximum likelihood estimator of $\theta$ is given by
\begin{equation} 
\label{DEFHAT}
\widehat{\theta}_T = \frac{\int_0^T X_t \dd X_t}{\int_0^T X_t^2 \dd t} = \frac{X_T^2 - T}{2\int_0^T X_t^2 \dd t}.
\end{equation}
In the standard situation where $\rho=0$, it is well-known that $\widehat{\theta}_T$ converges to $\theta$
almost surely. Moreover, as $\theta <0$, the process $(X_T)$ is positive recurrent and we have the asymptotic normality
\begin{equation*}
\sqrt{T}\Bigl(\widehat{\theta}_T - \theta\Bigr) \liml \cN(0, -2 \theta).
\end{equation*}
We shall see in Section \ref{SC-MR} that the almost sure limiting value of  $\widehat{\theta}_T$ and its asymptotic variance
will change as soon as $\rho< 0$. The estimation of $\rho$ requires the evaluation of the 
residuals generated by the estimation 
of $\theta$ at stage $T$. For all $0 \leq t \leq T$, denote
\begin{equation}
\label{ESTRES}
\whv_{t} = X_{t} - \estt \Sigma_{t}
\end{equation}
where
\begin{equation}
\label{SIG}
\Sigma_{t} = \int_{0}^{t} X_{s} \dd s.
\end{equation}
By analogy with \eqref{DEFHAT} and on the basis of the residuals \eqref{ESTRES}, we estimate
$\rho$ by
\begin{equation} 
\label{DEFHAR}
\widehat{\rho}_T  = \frac{\whv_T^2 - T}{2\int_0^T \whv_t^2 \dd t}.
\end{equation}
Therefore, we are in the position to define the continuous-time version of the discrete-time Durbin-Watson statistic 
\cite{BercuProia11}, \cite{DurbinWatson50}, \cite{DurbinWatson51}, \cite{DurbinWatson71},
\begin{equation} 
\label{DEFDW}
\widehat{D}_T  = \frac{2 \int_0^T \whv_t^2 \dd t - \whv_T^2 + T}{\int_0^T \whv_t^2 \dd t},
\end{equation}
which clearly means that $\widehat{D}_T=2(1-\widehat{\rho}_T)$. In Section \ref{SC-MR}, we shall make use of $\widehat{D}_T$
to build a serial correlation statistical test for the Ornstein-Uhlenbeck driven noise, that is to test whether
or not $\rho = 0$.

%%%%%%%%%%%%%%%%%%%%%%%%%%%%%%%%%%%%%%%%%%%%%%%%%%%%%%%%%%%%%%%%%%%%%%%%%%%%%%%%%%%%%%%%%%%%

\section{MAIN RESULTS}

%%%%%%%%%%%%%%%%%%%%%%%%%%%%%%%%%%%%%%%%%%%%%%%%%%%%%%%%%%%%%%%%%%%%%%%%%%%%%%%%%%%%%%%%%%%%
\label{SC-MR}

The almost sure convergences of our estimates are as follows.

\begin{thm}
\label{T-ASCVG}
We have the almost sure convergences
\begin{equation}
\label{ASCVGTR}
\limT \estt = \theta^{*},
\hspace{1cm}
\limT \estr = \rho^{*} 
\hspace{0.5cm}
 \textnormal{\cvgps}
\end{equation}
where
\begin{equation}
\label{LIMTR}
\theta^{*} = \theta + \rho
\hspace{1cm}\text{and}\hspace{1cm}
\rho^{*} = \frac{\theta \rho (\theta + \rho)}{(\theta + \rho)^2 + \theta \rho}.
\end{equation}
\end{thm}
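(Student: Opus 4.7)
The plan is to reduce both convergences to a handful of a.s.\ time averages and then resolve by algebra. Start from
\begin{equation*}
\estt = \theta + \frac{\int_0^T X_t \dd V_t}{\int_0^T X_t^2 \dd t} = \theta + \frac{\rho \int_0^T X_t V_t \dd t + \int_0^T X_t \dd W_t}{\int_0^T X_t^2 \dd t},
\end{equation*}
obtained by substituting $\dd X_t = \theta X_t \dd t + \dd V_t$ into the numerator of \eqref{DEFHAT} and then expanding $\dd V_t = \rho V_t \dd t + \dd W_t$. The convergence of $\estt$ thus reduces to identifying the a.s.\ limits of $T^{-1}\int_0^T X_t^2 \dd t$ and $T^{-1}\int_0^T X_t V_t \dd t$, together with a strong law for the stochastic integral in the numerator.

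When $\rho < 0$, the vector $(X_t, V_t)$ is a two-dimensional Ornstein-Uhlenbeck process with stable drift matrix $\bigl(\begin{smallmatrix}\theta & \rho \\ 0 & \rho\end{smallmatrix}\bigr)$, hence positive recurrent with a Gaussian stationary law. The cleanest way to extract the three limits is to apply It\^o to $X_t^2$, $V_t^2$, and $X_t V_t$, integrate on $[0,T]$, and divide by $T$: the stochastic integrals $\int X_t \dd W_t$, $\int V_t \dd W_t$, $\int (X_t+V_t) \dd W_t$ vanish a.s.\ by the SLLN for continuous martingales (quadratic variations $O(T)$), and the boundary terms $X_T^2/T$, $V_T^2/T$, $X_TV_T/T$ vanish a.s.\ from stationarity. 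Solving the resulting $3\times 3$ linear system yields
\begin{equation*}
\limT \frac{1}{T}\int_0^T X_t^2 \dd t = \limT \frac{1}{T}\int_0^T X_tV_t \dd t = -\frac{1}{2(\theta+\rho)}, \qquad \limT \frac{1}{T}\int_0^T V_t^2 \dd t = -\frac{1}{2\rho}.
\end{equation*}
Substituting back into the displayed identity yields $\estt - \theta \to \rho$, i.e.\ $\estt \to \theta + \rho$.

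For $\estr$, the decisive identity is $X_t = \theta \Sigma_t + V_t$ (integrate the first SDE from $0$ to $t$), which gives
\begin{equation*}
\whv_t = V_t - (\estt - \theta)\, \Sigma_t, \qquad \Sigma_t = \frac{X_t - V_t}{\theta}.
\end{equation*}
Hence $\Sigma_t^2$ and $V_t \Sigma_t$ are polynomials in $(X_t, V_t)$ whose time averages converge a.s.\ to explicit combinations of the three limits above. Expanding $\whv_t^2 = V_t^2 - 2(\estt - \theta) V_t \Sigma_t + (\estt - \theta)^2 \Sigma_t^2$, integrating, and using $\estt - \theta \to \rho$ delivers the a.s.\ limit of $T^{-1}\int_0^T \whv_t^2 \dd t$. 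At the boundary, $V_T$ and $\Sigma_T$ remain ergodically bounded so $\whv_T^2/T \to 0$. Substituting into \eqref{DEFHAR} and simplifying the resulting rational function of $(\theta, \rho)$ reproduces $\rho^{*} = \theta \rho (\theta+\rho)/[(\theta+\rho)^2 + \theta \rho]$.

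The main obstacle is the almost-sure (not merely in-probability) control of the boundary and martingale remainders, and the separate treatment of the degenerate case $\rho = 0$: then $V = W$ is not ergodic and one must fall back on classical single-parameter OU theory for $X$, verifying by direct computation that the formulas $\theta^{*} = \theta$ and $\rho^{*} = 0$ still hold.
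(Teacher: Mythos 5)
Your treatment of the case $\rho<0$ is essentially the paper's own argument: the same decomposition $\estt-\theta = (\rho P_T + M_T^X)/S_T$ with $P_T=\int_0^T X_tV_t\dd t$, the same ergodic limits $S_T/T,\,P_T/T \to -1/(2(\theta+\rho))$ and $L_T/T\to -1/(2\rho)$, and the same expansion of $\whv_t$ in terms of $V_t$ and $\Sigma_t=(X_t-V_t)/\theta$. (The paper obtains the ergodic limits by computing $\dE[X_t^2]$, $\dE[X_t\Sigma_t]$, $\dE[\Sigma_t^2]$ from a linear ODE and invoking the ergodic theorem for the geometrically ergodic pair $(X_t,\Sigma_t)$; your route via the It\^o identities for $X_t^2$, $V_t^2$, $X_tV_t$ is equivalent bookkeeping. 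One caveat: the almost sure, not merely in-probability, vanishing of the boundary terms $X_T^2/T$, $V_T^2/T$, $X_TV_T/T$ does not follow from ``stationarity'' alone; the clean fix is to first get existence of the a.s.\ limits of $S_T/T$, $P_T/T$, $L_T/T$ from the ergodic theorem and then read the boundary terms off the It\^o identities, which is what the paper does.)

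The genuine gap is the case $\rho=0$ for $\estr$, which you defer to ``classical single-parameter OU theory.'' It is not classical, and it is the hardest part of the theorem. When $\rho=0$ one has $V=W$, so $\whv_T = W_T - (\estt-\theta)\Sigma_T$, and $\whv_T^2/T$ does \emph{not} tend to $0$ or to any constant almost surely ($W_T^2/T$ converges in law to a $\chi^2_1$ and oscillates a.s.); likewise $\wh{L}_T/T$ has no finite limit, consistent with the formal value $-1/(2\rho^*)=\infty$. Your boundary claim that ``$V_T$ and $\Sigma_T$ remain ergodically bounded'' is simply false here. To conclude $\estr\to 0$ one must show that the denominator $\wh{L}_T$ grows strictly faster than the numerator $\whv_T^2-T$. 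The paper does this in three steps: (i) it proves, via self-similarity and a Laplace-transform computation, that $L_T=\int_0^T W_t^2\dd t$ satisfies $L_T/T^a\to+\infty$ a.s.\ for every $a<2$; (ii) it bounds $\whv_T^2 - T = 2M_T^W - 2W_T\Sigma_T(\estt-\theta)+\Sigma_T^2(\estt-\theta)^2 = o(L_T)$ a.s., using $M_T^W=o(L_T)$, $\Sigma_T^2\le T S_T$ and the rate $(\estt-\theta)^2=O(\log T/T)$ a.s.; (iii) it shows $\wh{L}_T=L_T(1+o(1))$ a.s. None of this appears in your sketch, so the second convergence in the statement is unproved in the case $\rho=0$.
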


%%%%%%%%%%%%%%%%%%%%%%%%%%%%%%%%%%%%%%%%%%%%%%%%%%%%%%%%%%%%%%%%%%%%%%%%%%%%%%%%%%%%%%%%%%%%

\begin{proof}

%%%%%%%%%%%%%%%%%%%%%%%%%%%%%%%%%%%%%%%%%%%%%%%%%%%%%%%%%%%%%%%%%%%%%%%%%%%%%%%%%%%%%%%%%%%%
We immediately deduce from \eqref{MOD} that
\begin{equation}
\label{DECOX}
\int_{0}^{T} X_{t} \dd X_{t} = \theta S_{T} + \rho P_{T} + M_{T}^{X}
\end{equation}
where
\begin{equation}
\label{DEFMX}
S_{T} = \int_{0}^{T} X_{t}^{\, 2} \dd t, \hspace{1cm} P_{T} = \int_{0}^{T} X_{t} V_{t} \dd t, \hspace{1cm} M_{T}^{X} = \int_{0}^{T} X_{t} \dd W_{t}.
\end{equation}
We shall see in Corollary \ref{COR-LIMX2} below that
\begin{equation}
\label{CVGS}
\limT \frac{1}{T} S_T = -\frac{1}{2 (\theta + \rho)} \hspace{1cm}\textnormal{\cvgps}
\end{equation}
and in the proof of Corollary \ref{COR-LIMVC2} that
\begin{equation}
\label{CVGP}
\limT \frac{1}{T} P_T = -\frac{1}{2 (\theta + \rho)} \hspace{1cm}\textnormal{\cvgps}
\end{equation}
Moreover, if $(\cF_{t})$ stands for the natural filtration of the standard Brownian motion $(W_{t})$, then
$(M_{t}^{X})$ is a continuous-time $(\cF_{t})-$martingale with quadratic variation $S_{t}$. 
Hence, it follows from the strong law of large numbers for continuous-time martingales given e.g.
in \cite{F76} or \cite{L78}, that $M_{T}^{X} = o(T)$ a.s. Consequently,
we obtain from \eqref{DECOX} that
\begin{equation}
\label{CVGNUMT}
\lim_{T \to \infty} \frac{1}{T} \int_{0}^{T} X_{t} \dd X_{t} = -\frac{\theta}{2(\theta+\rho)} -\frac{\rho}{2(\theta+\rho)}
= -\frac{1}{2} \hspace{1cm}\textnormal{\cvgps}
\end{equation}
which leads, via \eqref{DEFHAT}, to the first convergence in \eqref{ASCVGTR}.
The second convergence in \eqref{ASCVGTR} is more difficult to handle.
We infer from \eqref{MOD} that
\begin{equation}
\label{DECOV}
\int_{0}^{T} V_{t} \dd V_{t} =  \rho L_{T} + M_{T}^{V}
\end{equation}
where
\begin{equation}
\label{DEFMV}
L_{T} = \int_{0}^{T} V_{t}^{\, 2} \dd t \hspace{1cm} \text{and} \hspace{1cm} M_{T}^{V} = \int_{0}^{T} V_{t} \dd W_{t}.
\end{equation}
On the one hand, if $\rho<0$, it is well-known, see e.g. \cite{F76} page 728, that
\begin{equation}
\label{CVGL}
\limT \frac{1}{T} L_T = -\frac{1}{2 \rho} \hspace{1cm}\textnormal{\cvgps}
\end{equation}
In addition, $(M_{t}^{V})$ is a continuous-time $(\cF_{t})-$martingale with quadratic variation $L_{t}$. 
Consequently, $M_{T}^{V} = o(T)$ a.s. and we find from \eqref{DECOV} that
\begin{equation}
\label{CVGVV}
\lim_{T \to \infty} \frac{1}{T} \int_{0}^{T} V_{t} \dd V_{t} = -\frac{1}{2} \hspace{1cm}\textnormal{\cvgps}
\end{equation}
However, we know from It\^o's formula that
\begin{equation*}
\frac{1}{T} \int_{0}^{T} X_{t} \dd X_{t} = \frac{1}{2} \left( \frac{X_{T}^{\, 2}}{T} - 1 \right)
\hspace{0.5cm} \text{and} \hspace{0.5cm}
\frac{1}{T} \int_{0}^{T} V_{t} \dd V_{t} = \frac{1}{2} \left( \frac{V_{T}^{\, 2}}{T} - 1 \right).
\end{equation*}
Then, we deduce from \eqref{CVGNUMT} and \eqref{CVGVV} that
\begin{equation}
\label{CVGXV2}
\lim_{T \to \infty} \frac{X_{T}^{\, 2}}{T} = 0 
\hspace{1cm} \text{and} \hspace{1cm}
\lim_{T \to \infty} \frac{V_{T}^{\, 2}}{T} = 0 
\hspace{0.5cm} \textnormal{\cvgps}
\end{equation}
 As $X_T=\theta \Sigma_T+ V_T$, it clearly follows from \eqref{ESTRES} and \eqref{CVGXV2} that
\begin{equation}
\label{CVGNUMRH}
\lim_{T \to \infty} \frac{1}{2} \bigg( \frac{\whvc_{T}}{T} - 1 \bigg) = -\frac{1}{2} 
\hspace{0.5cm} \textnormal{\cvgps}
\end{equation}
Hereafter, we have from \eqref{DEFHAR} the decomposition
\begin{equation}
\label{DECORH}
\widehat{\rho}_T  = \frac{T}{2\wh{L}_T}  \bigg( \frac{\whvc_{T}}{T} - 1 \bigg)
\end{equation}
where
\begin{equation*}
\wh{L}_{T} = \int_{0}^{T} \whvc_{t}\dd t.
\end{equation*}
We shall see in Corollary \ref{COR-LIMVC2} below that
\begin{equation}
\label{CVGDENRH}
\limT \frac{1}{T} \wh{L}_T = -\frac{1}{2  \rho^{*}} \hspace{1cm}\textnormal{\cvgps}
\end{equation}
Therefore, \eqref{DECORH} together with \eqref{CVGNUMRH} and \eqref{CVGDENRH} directly imply
\eqref{ASCVGTR}. On the other hand, if $\rho = 0$, it is clear from \eqref{MOD} that for all $t \geq 0$,
$V_t=W_t$. Hence, we have from \eqref{ESTRES} and It\^o's formula that
\begin{equation}
\label{DECONUMO}
\whvc_{T} - T = 2 M_T^W -2W_T \Sigma_T (\estt - \theta) + \Sigma_T^2(\estt - \theta)^2
\end{equation}
and
\begin{equation}
\label{DECODENO}
\wh{L}_{T} = L_{T}- 2 (\estt - \theta)\int_{0}^{T} W_t \Sigma_t\dd t + (\estt - \theta)^2 \int_{0}^{T} \Sigma_t^2\dd t
\end{equation}
where
\begin{equation*}
L_{T} = \int_{0}^{T} W_{t}^{\, 2} \dd t \hspace{1cm} \text{and} \hspace{1cm} M_{T}^{W} = \int_{0}^{T} W_{t} \dd W_{t}.
\end{equation*}
It is now necessary to investigate the a.s. asymptotic behavior of $L_T$. We deduce from the self-similarity of the 
Brownian motion $(W_t)$ that
\begin{equation}
\label{SELFS}
L_T= \int_{0}^{T} W_{t}^{\, 2} \dd t \overset{\cL}{=}  T \int_{0}^{T} W_{t/T}^{\, 2} \dd t \overset{\cL}{=} T^{\, 2} \int_{0}^{1} W_{s}^{\, 2} \dd s= T^2 L
\end{equation}
Consequently, it clearly follows from \eqref{SELFS} that for any power $0<a<2$,
\begin{equation}
\label{CVGLINF}
\limT \frac{1}{T^a} L_T = +\infty \hspace{1cm}\textnormal{\cvgps}
\end{equation}
As a matter of fact, since $L$ is almost surely positive, it is enough to show that
\begin{equation}
\label{LAPLACE}
\limT \dE\left[\exp\Bigl(-\frac{1}{T^a} L_T \Bigr)\right]=0.
\end{equation}
However, we have from standard Gaussian calculations, see e.g. \cite{LS01} page 232, that
\begin{equation*}
\dE\left[\exp\Bigl(-\frac{1}{T^a} L_T \Bigr)\right]=\dE\left[\exp\Bigl(-\frac{T^2}{T^a} L \Bigr)\right]
= \frac{1}{\sqrt{\cosh(v_T(a))}}
\end{equation*}
where $v_T(a)=\sqrt{2T^{2-a}}$ goes to infinity, which clearly leads to \eqref{LAPLACE}.
Furthermore, $(M_{t}^{W})$ is a continuous-time $(\cF_{t})-$martingale with quadratic variation $L_{t}$. 
We already saw that $L_{T}$ goes to infinity a.s. which implies that $M_{T}^{W} = o(L_{T})$ a.s. In addition, we obviously have
$\Sigma_T^2 \leq T S_T$. One can observe that convergence \eqref{CVGS} still holds when $\rho=0$, which ensures
that $\Sigma_T^2 \leq T^2$ a.s. Moreover, we deduce from the strong law of large numbers for continuous-time martingales that
\begin{equation*}
(\estt - \theta)^2=O \left( \frac{\log T}{T} \right)  \hspace{1cm}\textnormal{\cvgps}
\end{equation*}
which implies that 
$\Sigma_T^2(\estt - \theta)^2=O ( T\log T )=o(L_T)$ a.s.
By the same token, as $X_T^2=o(T)$ and $W_T^2=o(T \log T)$ a.s., we find that
$$W_T\Sigma_T (\estt - \theta)=o(L_T) \hspace{1cm}\textnormal{\cvgps} $$
Consequently, we obtain from \eqref{DECONUMO} that
\begin{equation}
\label{CVGNUMO}
\whvc_{T} - T = o(L_T) \hspace{1cm}\textnormal{\cvgps}
\end{equation}
It remains to study the a.s. asymptotic behavior of $\wh{L}_T$. One can easily see that
\begin{equation*}
\int_{0}^{T} \Sigma_t^2\dd t \leq \frac{2}{\theta^2} ( S_T + L_T).
\end{equation*}
However, it follows from \eqref{CVGS} and \eqref{CVGLINF} that $S_T=o(L_T)$ a.s.
which ensures that
\begin{equation}
\label{CVGDENO1}
(\estt - \theta)^2\int_{0}^{T} \Sigma_t^2\dd t=o(L_T) \hspace{1cm}\textnormal{\cvgps}
\end{equation}
Via the same arguments,
\begin{equation}
\label{CVGDENO2}
(\estt - \theta)\int_{0}^{T} W_t \Sigma_t\dd t =o(L_T) \hspace{1cm}\textnormal{\cvgps}
\end{equation}
Then, we find from \eqref{DECODENO}, \eqref{CVGDENO1} and \eqref{CVGDENO2} that
\begin{equation}
\label{CVGDENO}
 \wh{L}_{T} = L_{T}(1+o(1)) \hspace{1cm}\textnormal{\cvgps}
\end{equation}
Finally, the second convergence in \eqref{ASCVGTR} follows from
\eqref{CVGNUMO} and \eqref{CVGDENO} which achieves the proof of Theorem \ref{T-ASCVG}.
\end{proof}

Our second result deals with the asymptotic normality of our estimates

\begin{thm}
\label{T-AN}
If $\rho<0$, we have the joint asymptotic normality
\begin{equation}
\label{JOINTCLT}
\sqrt{T} \begin{pmatrix}
\estt - \theta^{*} \\
\estr - \rho^{*}
\end{pmatrix}
\liml \cN(0, \Gamma)
\end{equation}
where the asymptotic covariance matrix
\begin{equation}
\label{GAM}
\Gamma = \begin{pmatrix}
\sigma_{\theta}^2 & \ell \\
\ell & \sigma_{\rho}^2
\end{pmatrix}
\end{equation}
with $\sigma_{\theta}^2 = - 2\theta^{*}$, $\ell  = {\displaystyle  \frac{2 \rho^{*} \left( (\theta^{*})^2 - \theta \rho \right) }{(\theta^{*})^2 + \theta \rho}}$
and
\begin{equation*}
\sigma_{\rho}^2 = -\frac{2 \rho^{*} \left( (\theta^{*})^6 + \theta \rho \left( (\theta^{*})^4 - \theta \rho \left( 2 (\theta^{*})^2 - \theta \rho \right) \right) \right) }{\left( (\theta^{*})^2 + \theta \rho \right)^3}. 
\end{equation*}
\noindent
In particular, we have 
\begin{equation}
\label{CLTT}
\sqrt{T} \left( \estt - \theta^{*} \right) \liml \cN(0, \sigma_{\theta}^2),
\end{equation}
and
\begin{equation}
\label{CLTR}
\sqrt{T} \Big( \estr - \rho^{*} \Big) \liml \cN(0, \sigma_{\rho}^2).
\end{equation}
\end{thm}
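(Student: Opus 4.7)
The plan is to obtain the joint CLT by reducing each component of $\sqrt{T}\bigl(\widehat\theta_T - \theta^{*}, \widehat\rho_T - \rho^{*}\bigr)$, modulo terms vanishing in probability, to an explicit linear functional of the normalized continuous-time martingale pair $(M_T^X/\sqrt{T}, M_T^V/\sqrt{T})$, and then to apply the multivariate CLT for continuous-time martingales to the two-dimensional $(\cF_t)$-martingale $(M^X, M^V)$.

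For $\widehat\theta_T$ I would start from $\widehat\theta_T = \theta + \rho P_T/S_T + M_T^X/S_T$, which was already derived in the proof of Theorem \ref{T-ASCVG}, and use the elementary identity $P_T - S_T = -\theta \Sigma_T^{\, 2}/2$, which follows from $\dd(\Sigma_t^{\, 2}) = 2 X_t \Sigma_t \dd t$, to obtain
\begin{equation*}
\widehat\theta_T - \theta^{*} = \frac{-\theta\rho\, \Sigma_T^{\, 2}/2 + M_T^X}{S_T}.
\end{equation*}
Since $(X_t, V_t)$ solves a positive recurrent bivariate Ornstein-Uhlenbeck equation (both eigenvalues $\theta, \rho$ of the drift matrix being negative), its two components are bounded in probability in stationary regime, hence so is $\Sigma_T = (X_T - V_T)/\theta$. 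Combined with $S_T/T \to -1/(2\theta^{*})$ a.s.\ this yields
\begin{equation*}
\sqrt{T}\bigl(\widehat\theta_T - \theta^{*}\bigr) = -2\theta^{*}\, \frac{M_T^X}{\sqrt{T}} + o(1) \quad \text{in probability.}
\end{equation*}

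For $\widehat\rho_T$ the analysis is substantially longer. Writing $\widehat V_t = V_t - u_T \Sigma_t$ with $u_T = \widehat\theta_T - \theta$, expanding $\widehat V_T^{\, 2}$ and $\widehat L_T$, using $V_T^{\, 2} - T = 2\rho L_T + 2 M_T^V$ together with the explicit identities
\begin{equation*}
\int_0^T V_t \Sigma_t \dd t = \frac{P_T - L_T}{\theta}, \qquad \int_0^T \Sigma_t^{\, 2} \dd t = \frac{S_T - 2 P_T + L_T}{\theta^2},
\end{equation*}
and absorbing the boundary contributions $V_T \Sigma_T$ and $\Sigma_T^{\, 2}$ (which are bounded in probability) into a remainder of order $O(1)$, one gets
\begin{equation*}
\widehat V_T^{\, 2} - T - 2\rho^{*}\widehat L_T = 2(\rho - \rho^{*}) L_T + 2 M_T^V + 4\rho^{*} u_T\, \frac{P_T - L_T}{\theta} - 2\rho^{*} u_T^{\, 2}\, \frac{S_T - 2 P_T + L_T}{\theta^2} + O(1).
\end{equation*}
Substituting $u_T = \rho + (u_T - \rho)$ and recalling from the previous step that $u_T - \rho = \widehat\theta_T - \theta^{*} = O(1/\sqrt{T})$ in probability, the deterministic combination of $L_T, P_T, S_T$ of order $T$ cancels exactly by virtue of the defining relation $\rho^{*}\bigl((\theta^{*})^2 + \theta\rho\bigr) = \theta\rho\theta^{*}$ from \eqref{LIMTR}. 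The remaining $O(\sqrt{T})$ terms are expressible through $M_T^X$ and $M_T^V$: from $V_T^{\, 2} = 2\rho L_T + 2 M_T^V + T$, from the It\^o expansion $X_T V_T = \theta^{*} P_T + \rho L_T + T + M_T^X + M_T^V$ (using $\dd\langle X, V\rangle_t = \dd t$), and from $X_T^{\, 2} = 2\theta S_T + 2\rho P_T + 2 M_T^X + T$, one recovers the fluctuations
\begin{equation*}
L_T + \frac{T}{2\rho} = -\frac{M_T^V}{\rho} + O(1), \qquad P_T + \frac{T}{2\theta^{*}} = -\frac{M_T^X}{\theta^{*}} + O(1), \qquad S_T + \frac{T}{2\theta^{*}} = -\frac{M_T^X}{\theta^{*}} + O(1),
\end{equation*}
each in probability. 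Collecting the coefficients produces
\begin{equation*}
\sqrt{T}\bigl(\widehat\rho_T - \rho^{*}\bigr) = \alpha\, \frac{M_T^X}{\sqrt{T}} + \beta\, \frac{M_T^V}{\sqrt{T}} + o(1) \quad \text{in probability,}
\end{equation*}
for explicit constants $\alpha, \beta$ depending on $\theta$ and $\rho$.

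The final step is the multivariate martingale CLT for the continuous $(\cF_t)$-martingale $(M^X, M^V)$, whose bracket matrix is $\bigl(\langle M^X\rangle_T, \langle M^X, M^V\rangle_T ;\, \langle M^X, M^V\rangle_T, \langle M^V\rangle_T\bigr) = (S_T, P_T;\, P_T, L_T)$. By \eqref{CVGS}, \eqref{CVGP} and \eqref{CVGL}, this matrix divided by $T$ converges almost surely to
\begin{equation*}
\Sigma_\infty = \begin{pmatrix} -1/(2\theta^{*}) & -1/(2\theta^{*}) \\ -1/(2\theta^{*}) & -1/(2\rho) \end{pmatrix},
\end{equation*}
so $(M_T^X/\sqrt{T}, M_T^V/\sqrt{T}) \liml \cN(0, \Sigma_\infty)$. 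Slutsky's lemma combined with the two reductions above delivers the joint CLT \eqref{JOINTCLT}; the marginal \eqref{CLTT} with $\sigma_\theta^2 = -2\theta^{*}$ is immediate, and the explicit forms of $\sigma_\rho^2$ and $\ell$ follow by evaluating the relevant quadratic and bilinear forms in the coefficient vectors $(-2\theta^{*}, 0)$ and $(\alpha, \beta)$ against $\Sigma_\infty$ and then simplifying through the defining identities of $\theta^{*}$ and $\rho^{*}$. The main obstacle lies entirely in the second step: tracking which pieces of $\widehat V_T^{\, 2} - T - 2\rho^{*}\widehat L_T$ contribute at order $T$ (these cancel by the choice of $\rho^{*}$), which at order $\sqrt{T}$ (these produce $\alpha$ and $\beta$), and which are negligible, and then massaging the resulting scalar variance into the compact rational form stated for $\sigma_\rho^2$.
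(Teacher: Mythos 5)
Your proposal is correct and follows essentially the same route as the paper: linearize $\widehat{\theta}_T-\theta^{*}$ and $\widehat{\rho}_T-\rho^{*}$ around the bivariate martingale $(M_T^X,M_T^V)$ using the It\^o identities for $S_T$, $P_T$, $L_T$, control the boundary terms $X_T^2$, $V_T^2$, $\Sigma_T^2$ via positive recurrence so they are $o(\sqrt{T})$ in probability, and conclude with the vector martingale CLT applied to the bracket matrix $(S_T,P_T;P_T,L_T)/T\to\Lambda$ plus Slutsky's lemma. The only difference is bookkeeping (you expand $\widehat{V}_t=V_t-(\widehat{\theta}_T-\theta)\Sigma_t$ directly where the paper organizes the same computation through $I_T$, $J_T$, $K_T$ and the coefficients $C_X$, $C_V$), and like the paper you leave the final coefficient extraction as a routine but tedious calculation.
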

\begin{proof} We obtain from \eqref{DEFHAT} the decompostion
\begin{equation}
\label{DECT}
\estt - \theta^{*} = \frac{M_{T}^{X}}{S_{T}} + \frac{R_{T}^{X}}{S_{T}}
\end{equation}
where
\begin{equation*}
R_{T}^{X} = \rho \int_{0}^{T} X_{t} (V_{t} - X_{t}) \dd t = - \theta \rho \int_{0}^{T} \Sigma_{t} \dd \Sigma_{t} = -\frac{\theta \rho}{2} \Sigma_{T}^{\, 2}.
\end{equation*}
We shall now establish a similar decomposition for $\estr - \rho^{*}$. It follows from \eqref{ESTRES} that for all $0 \leq t \leq T$,
\begin{eqnarray*}
\whv_{t} & = & X_{t} - \estt \Sigma_{t} = V_{t} - (\estt - \theta) \Sigma_{t} = V_{t} - (\estt - \theta^{*}) \Sigma_{t} - \rho \Sigma_{t}, \\
& = &  V_{t} - \frac{\rho}{\theta} (X_t - V_t) - \frac{1}{\theta}(\estt - \theta^{*}) (X_t - V_t) = \frac{\theta^{*}}{\theta}V_t - 
\frac{\rho}{\theta} X_t - \frac{1}{\theta}(\estt - \theta^{*}) (X_t - V_t),
\end{eqnarray*} 
which leads to
\begin{equation}
\label{DECLHAT}
\wh{L}_{T} = I_{T} + ( \wh{\theta}_{T} - \theta^{*} ) \left( J_{T} + ( \wh{\theta}_{T} - \theta^{*} ) K_{T} \right),
\end{equation}
where
\begin{eqnarray*}
I_{T} & = & \frac{1}{\theta^2} \Big( \rho^2 S_{T} + (\theta^{*})^2 L_{T} - 2 \theta^{*} \rho P_{T} \Big),\\
J_{T} & = & \frac{1}{\theta^2} \Big( 2 \rho S_{T} + 2 \theta^{*} L_{T} - 2(\theta + 2\rho) P_{T} \Big), \\
K_{T} & = & \frac{1}{\theta^2} \Big( S_{T} + L_{T} - 2 P_{T} \Big).
\end{eqnarray*}
Then, we deduce from \eqref{DEFHAR} and \eqref{DECLHAT} that
\begin{equation}
\label{DECR1}
\wh{L}_{T} \left( \estr - \rho^{*} \right) = \frac{I_{T}^V}{2} + \frac{1}{2}( \wh{\theta}_{T} - \theta^{*} ) 
\left( J_{T}^V +  ( \wh{\theta}_{T} - \theta^{*} ) K_{T}^V \right)
\end{equation}
in which
$I_{T}^V = \whvc_{T} - T - 2\rho^{*} I_{T}$, $J_{T}^V = - 2\rho^{*} J_{T}$, and $K_{T}^K = - 2\rho^{*} K_{T}$.
At this stage, in order to simplify the complicated expression \eqref{DECR1}, we make repeatedly use of It\^o's formula. For all $0 \leq t \leq T$, we have
\begin{eqnarray*}
L_{t} & = & \frac{1}{2 \rho} V_{t}^{\, 2} - \frac{1}{\rho} M_{t}^{V} - \frac{t}{2 \rho}, \\
P_{t} & = & \frac{1}{\theta^{*}} X_{t} V_{t} - \frac{1}{2 \theta^{*}} V_{t}^{\, 2} - \frac{1}{\theta^{*}} M_{t}^{X} - \frac{t}{2 \theta^{*}}, \\
S_{t} & = & \frac{1}{2 \theta} X_{t}^{\, 2} + \frac{\rho}{2 \theta^{*} \theta} V_{t}^{\, 2} - \frac{\rho}{\theta^{*} \theta} X_{t} V_{t} - \frac{1}{\theta^{*}} M_{t}^{X} - \frac{t}{2 \theta^{*}},
\end{eqnarray*}
where the continuous-time martingales $M_{t}^{X}$ and $M_{t}^{V}$ were previously defined in \eqref{DEFMX} and \eqref{DEFMV}.
Therefore, it follows from tedious but straightforward calculations that
\begin{equation}
\label{DECR2}
\wh{L}_{T} \left( \estr - \rho^{*} \right) = C_{X} M_{T}^{X} + C_{V} M_{T}^{V} + \frac{J_{T}^V}{2} 
( \wh{\theta}_{T} - \theta^{*} ) + R_{T}^{V}
\end{equation}
where
\begin{equation*}
C_{V} = \frac{ (\theta^{*})^2 \rho^{*}}{\theta^2 \rho} \hspace{1cm} \text{and} \hspace{1cm} C_{X} = -\frac{ \rho (2 \theta + \rho) \rho^{*}}{\theta^2 \theta^{*}}.
\end{equation*}
The remainder $R_{T}^{V}$ is similar to $R_{T}^{X}$ and they play a negligible role.
The combination of \eqref{DECT} and \eqref{DECR2} leads to the vectorial expression
\begin{equation}
\label{DECTR}
\sqrt{T} \begin{pmatrix}
\estt - \theta^{*} \\
\estr - \rho^{*} 
\end{pmatrix} = \frac{1}{\sqrt{T}} A_{T} Z_{T} + \sqrt{T} R_{T}
\end{equation}
where
\begin{equation*}
A_{T} = \begin{pmatrix}
S_{T}^{-1} T  &  0 \\
B_{T} \wh{L}_{T}^{-1}T &  C_{V} \wh{L}_{T}^{-1} T 
\end{pmatrix},
\hspace{1cm} 
R_{T} = \begin{pmatrix}
S_{T}^{-1} R_{T}^{X} \\
\wh{L}_{T}^{-1} D_{T}
\end{pmatrix}
\end{equation*}
with $B_T=C_{X} + J_{T}^V(2S_{T})^{-1}$ and $D_{T}=R_T^V+ J_{T}^V(2S_{T})^{-1}R_{T}^X$. The leading term in \eqref{DECTR} is 
the continuous-time vector $(\cF_{t})-$martingale $(Z_{t})$ with predictable quadratic variation $\langle Z \rangle_{t}$ given by
\begin{equation*}
Z_{t} = \begin{pmatrix}
M_{t}^{X} \\
M_{t}^{V}
\end{pmatrix}
\hspace{1cm} \text{and} \hspace{1cm}
\langle Z \rangle_{t} = \begin{pmatrix}
S_{t} & P_{t} \\
P_{t} & L_{t}
\end{pmatrix}.
\end{equation*}
We deduce from \eqref{CVGS}, \eqref{CVGP} and \eqref{CVGL} that
\begin{equation}
\label{CVGA}
\limT A_{T} = A \hspace{0.5cm} \cvgps
\end{equation}
where $A$ is the limiting matrix given by
\begin{equation*}
A= \begin{pmatrix}
-2 \theta^{*} & 0 \\
-2\rho^{*} (C_{X} -  2 (\theta \rho)^{-1} \theta^{*} \rho^{*} ) 
 & -2\rho^{*} C_{V} 
\end{pmatrix}.
\end{equation*}
By the same token, we immediately have from \eqref{CVGS}, \eqref{CVGP} and \eqref{CVGL} that
\begin{equation}
\label{CVGZZ}
\limT \frac{\langle Z \rangle_{T}}{T} = \Lambda =
-\frac{1}{2 \theta^{*}}\begin{pmatrix}
1 & 1 \\
1 & \theta^{*} \rho^{-1}
\end{pmatrix} 
\hspace{0.5cm}  \cvgps
\end{equation}
Furthermore, it clearly follows from Corollary \ref{COR-LIMSTAT} below that
\begin{equation}
\label{CVGRESP}
\frac{X_{T}^{\, 2}}{\sqrt{T}} \limp 0
\hspace{1cm} \text{and} \hspace{1cm}
\frac{V_{T}^{\, 2}}{\sqrt{T}} \limp 0.
\end{equation}
Finally, as $\Gamma = A \Lambda A^{\prime}$, the joint asymptotic normality \eqref{JOINTCLT} follows from the conjunction of
\eqref{DECTR}, \eqref{CVGA}, \eqref{CVGZZ}, \eqref{CVGRESP} together with Slutsky's lemma and the central limit theorem for 
continuous-time vector martingales given e.g. in \cite{F76}, which achieves the proof of Theorem \ref{T-AN}.
\end{proof}

\begin{thm}
\label{T-AN0}
If $\rho=0$, we have the convergence in distribution
\begin{equation}
\label{CVGRH0}
T \, \estr \liml \cW
\end{equation}
where the limiting distribution $\cW$ is given by
\begin{equation}
\label{DEFW}
\cW = \frac{ \int_{0}^{1} B_{s} \dd B_{s}}{\int_{0}^{1} B_{s}^{\, 2} \dd s}= \frac{B_1^2 - 1}{2\int_0^1 B_s^2 \dd s}
\end{equation}
and $(B_{t})$ is a standard Brownian motion.
\end{thm}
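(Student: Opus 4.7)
\textbf{Proof plan for Theorem \ref{T-AN0}.} The plan is to push the $\rho=0$ decompositions \eqref{DECONUMO}--\eqref{DECODENO} from the proof of Theorem \ref{T-ASCVG} from an almost sure to a distributional analysis, and to identify the limit using Brownian self-similarity rather than through a martingale CLT. Since the almost sure arguments already show $\whvc_T - T = o(L_T)$ and $\wh{L}_T = L_T(1+o(1))$, the natural deterministic scales are $T$ for the numerator and $T^2$ for the denominator, and the starting point is
\begin{equation*}
T \, \estr \;=\; \frac{(\whvc_T - T)/T}{2 \wh{L}_T / T^2}.
\end{equation*}

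For the numerator, since $\rho=0$ forces $V_t = W_t$, \eqref{DECONUMO} reads $\whvc_T - T = 2 M_T^W - 2 W_T \Sigma_T (\estt - \theta) + \Sigma_T^2 (\estt - \theta)^2$ with $2 M_T^W = W_T^2 - T$ by It\^o's formula. Using $\estt - \theta = O_{\dP}(T^{-1/2})$ (the standard OU CLT, which applies since with $\rho=0$ the process $X$ is a positive recurrent OU), $W_T = O_{\dP}(\sqrt{T})$, and $\Sigma_T = (X_T - W_T)/\theta = O_{\dP}(\sqrt{T})$ obtained by integrating \eqref{MOD}, both cross-terms are $o_{\dP}(T)$, giving
\begin{equation*}
\frac{\whvc_T - T}{T} \;=\; \frac{W_T^2 - T}{T} \;+\; o_{\dP}(1).
\end{equation*}
For the denominator, \eqref{DECODENO} together with the Cauchy--Schwarz estimate $|\int_0^T W_t \Sigma_t \dd t| \leq (L_T S_T)^{1/2} = O_{\dP}(T^{3/2})$ and the inequality $\int_0^T \Sigma_t^2 \dd t \leq 2\theta^{-2}(S_T + L_T) = O_{\dP}(T^2)$ already used in the proof of Theorem \ref{T-ASCVG} yields
\begin{equation*}
\frac{\wh{L}_T}{T^2} \;=\; \frac{L_T}{T^2} \;+\; o_{\dP}(1).
\end{equation*}

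The last step is essentially free: since $(W_{Ts}/\sqrt{T})_{s \in [0,1]}$ is distributed as a standard Brownian motion $(B_s)_{s\in[0,1]}$, one has the \emph{exact} distributional identity
\begin{equation*}
\left(\frac{W_T^2 - T}{T}, \; \frac{L_T}{T^2}\right) \;\overset{\cL}{=}\; \left(B_1^2 - 1, \; \int_0^1 B_s^2 \dd s\right)
\end{equation*}
valid for every $T>0$, so no additional convergence step is needed here. Slutsky's lemma and the continuous mapping theorem applied to the ratio then deliver $T \, \estr \liml (B_1^2 - 1)/(2\int_0^1 B_s^2 \dd s) = \cW$, the second expression of $\cW$ being recovered via $2\int_0^1 B_s \dd B_s = B_1^2 - 1$. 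The main obstacle is purely bookkeeping: in the a.s. proof the correction terms were compared against the random scale $L_T \to \infty$, which is generous, whereas here they must be controlled as $o_{\dP}(T^2)$ against a deterministic scale; the tightest of these is $(\estt - \theta) \int_0^T W_t \Sigma_t \dd t = O_{\dP}(T) = o_{\dP}(T^2)$, which just fits, and one must be careful not to reuse the $o(L_T)$ bounds blindly.
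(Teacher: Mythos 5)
Your proposal is correct and follows essentially the same route as the paper: the paper likewise reduces the numerator to $W_T^{\,2}-T$ and the denominator to $L_T$ (via its decomposition \eqref{DECLHAT0}), then invokes the exact self-similarity identity \eqref{CVGCOUP} and the continuous mapping theorem, and you merely make explicit the $o_{\dP}$ bookkeeping that the paper states as an observation. One harmless slip: Cauchy--Schwarz gives $|\int_0^T W_t\Sigma_t\dd t|\le \bigl(L_T\int_0^T\Sigma_t^{\,2}\dd t\bigr)^{1/2}=O_{\dP}(T^{2})$, not $(L_TS_T)^{1/2}$, so that term is $O_{\dP}(T^{3/2})$ after multiplying by $\estt-\theta$ --- still $o_{\dP}(T^{2})$, so nothing breaks.
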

\begin{proof}
Via the same reasoning as in Section 2 of \cite{F79}, it follows from
the self-similarity of the Brownian motion $(W_t)$ that
\begin{eqnarray}
\label{CVGCOUP}
\left( \int_{0}^{T} W_{t}^{\, 2} \dd t, \, \frac{1}{2} \left( W_{T}^{\, 2} - T \right) \right) & \overset{\cL}{=} & \left( T \int_{0}^{T} W_{t/T}^{\, 2} \dd t, \, \frac{T}{2} \left( W_{1}^{\, 2} - 1 \right) \right), \nonumber \\
& = & \left( T^{\, 2} \int_{0}^{1} W_{s}^{\, 2} \dd s, \, \frac{T}{2} \left( W_{1}^{\, 2} - 1 \right) \right).
\end{eqnarray}
Moreover, we obtain from \eqref{DECLHAT} that
\begin{equation}
\label{DECLHAT0}
\wh{L}_{T} = \alpha_T S_T + \beta_T P_T + \gamma_T L_T
\end{equation}
where
\begin{eqnarray*}
\alpha_{T} & = & \frac{1}{\theta^2} \, ( \estt - \theta )^{\! 2}, \\
\beta_{T} & = & -\frac{2}{\theta} \, ( \estt - \theta ) - \frac{2}{\theta^2} \, ( \estt - \theta )^{\! 2}, \\
\gamma_{T} & = & 1 + \frac{2}{\theta} \, ( \estt - \theta ) + \frac{1}{\theta^2} \, ( \estt - \theta )^{\! 2}. 
\end{eqnarray*}
By Theorem \ref{T-ASCVG}, $\estt$ converges a.s. to $\theta$ which implies that $\alpha_{T}$, $\beta_{T}$, 
and $\gamma_{T}$ converge a.s. to $0,0$ and $1$. Hence, we deduce from \eqref{CVGS}, \eqref{CVGP} and \eqref{DECLHAT0}
that
\begin{equation}
\label{EQDENP}
 \wh{L}_{T} = L_{T}(1+o(1)) \hspace{1cm}\textnormal{\cvgps}
\end{equation}
Furthermore, one can observe that $\whvc_{T}/T$ shares the same asymptotic distribution as $W_{T}^{\, 2}/T$.
Finally, \eqref{CVGRH0} follows from \eqref{CVGCOUP} and \eqref{EQDENP} together with the continuous mapping theorem.
\end{proof}

\begin{rem}
The asymptotic behavior of $\estr$ when $\rho <0$ and $\rho=0$ is closely related to the results previously established
for the unstable discrete-time autoregressive process, see \cite{ChanWei88}, \cite{F79}, \cite{W58}.
According to Corollary 3.1.3 of \cite{ChanWei88}, we can express
\begin{equation*}
\cW = \frac{\cT^{\, 2} - 1}{2 \cS}
\end{equation*}
where $\cT$ and $\cS$ are given by the Karhunen-Loeve expansions
\begin{equation*}
\cT = \sqrt{2} \, \sum_{n=1}^{\infty} \gamma_{n} Z_{n}\hspace{1cm} \text{and} \hspace{1cm} \cS = \sum_{n=1}^{\infty} \gamma_{n}^{\, 2} Z_{n}^{\, 2}
\end{equation*}
with $\gamma_{n} = 2 (-1)^{n}/((2n - 1) \pi)$ and $(Z_{n})$ is a sequence of independent random variables with
$\cN(0,1)$ distribution.
\end{rem}

\begin{rem}
For all $0 \leq t \leq T$, the residuals $\whv_{t}$ given by \eqref{ESTRES} depend on $\estt$. It would have been more natural to make use
of the estimator of $\theta$ at stage $t$ instead of stage $T$, in order to produce a recursive estimate. In this situation, Theorem \ref{T-ASCVG} still holds but we have been unable to prove Theorem \ref{T-AN}.
\end{rem}

%%%%%%%%%%%%%%%%%%%%%%%%%%%%%%%%%%%%%%%%%%%%%%%%%%%%%%%%%%%%%%%%%%%%%%%%%%%%%%%%%%%%%%%%%%%%

\section{SOME TECHNICAL TOOLS}

%%%%%%%%%%%%%%%%%%%%%%%%%%%%%%%%%%%%%%%%%%%%%%%%%%%%%%%%%%%%%%%%%%%%%%%%%%%%%%%%%%%%%%%%%%%%
\label{SC-TOOLS}

First of all, most of our results rely on the following keystone lemma.
\begin{lem}
\label{LERGO}
The process $(X_{t})$ is geometrically ergodic.
\end{lem}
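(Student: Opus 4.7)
My plan is to embed $(X_t)$ in a two-dimensional Markov process, since $(X_t)$ alone is not Markov when $\rho<0$. Setting $Z_t=(X_t,V_t)^{T}$, the system \eqref{MOD} rewrites as the linear SDE
\begin{equation*}
\dd Z_t = B Z_t \dd t + \sigma \dd W_t, \qquad B=\begin{pmatrix}\theta & \rho\\ 0 & \rho\end{pmatrix},\quad \sigma=\begin{pmatrix}1\\1\end{pmatrix},
\end{equation*}
so $(Z_t)$ is a two-dimensional Ornstein-Uhlenbeck process whose drift has eigenvalues $\theta$ and $\rho$. I would then treat the cases $\rho<0$ and $\rho=0$ separately.

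When $\rho<0$, both eigenvalues lie in the open left half-plane, so $B$ is Hurwitz, and the Foster-Lyapunov approach of Meyn-Tweedie applies to $(Z_t)$. For the drift condition I would pick $W(z)=z^{T} Q z$ where $Q$ is the unique positive definite solution of $B^{T} Q + Q B = -I$ (which exists by the Hurwitz property). A direct computation of the infinitesimal generator gives $\cL W(z)=-|z|^{2}+\mathrm{tr}(Q\sigma\sigma^{T})$, whence $\cL W \leq -c W + K$ for some positive constants $c$ and $K$. The minorization condition requires care because $\sigma \sigma^{T}$ has rank one; however H\"ormander's bracket condition is satisfied, since
\begin{equation*}
\det\bigl(\sigma, B\sigma\bigr)=\det\begin{pmatrix}1 & \theta+\rho\\ 1 & \rho\end{pmatrix}=-\theta\neq 0,
\end{equation*}
so the transition kernel admits a smooth positive density for every $t>0$ and hence a local minorization on every compact set. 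The Meyn-Tweedie theorem then delivers the geometric ergodicity of $(Z_t)$ in total variation towards its centered Gaussian invariant measure $\pi=\cN(0,P_{\infty})$, whose covariance solves $BP_{\infty}+P_{\infty} B^{T}+\sigma\sigma^{T}=0$. Since the total variation distance is non-increasing under marginalization, the geometric ergodicity of $(X_t)$ follows at once.

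When $\rho=0$, the second equation in \eqref{MOD} degenerates to $V_t=W_t$ and the first becomes the classical one-dimensional Ornstein-Uhlenbeck process $\dd X_t=\theta X_t\dd t+\dd W_t$ with $\theta<0$, whose geometric ergodicity is a standard textbook result. Here the two-dimensional process is not ergodic (one eigenvalue of $B$ vanishes), so it is essential to work directly with $(X_t)$.

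The main obstacle is the minorization step in the regime $\rho<0$, owing to the rank-one degeneracy of the diffusion matrix. It is precisely the off-diagonal coupling entry $\rho$ in $B$ that rescues the argument through H\"ormander's hypoelliptic condition, ensuring that the transition kernel of $(Z_t)$ has a smooth density. The drift step is then routine once the Hurwitz property of $B$ is identified, and marginalization back to the first coordinate concludes the proof.
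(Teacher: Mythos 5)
Your proof is correct and follows essentially the same route as the paper: a two-dimensional Markovian lift of $(X_t)$ governed by a linear SDE whose drift matrix has eigenvalues $\theta$ and $\rho$, with the case $\rho=0$ handled separately as a classical Ornstein--Uhlenbeck process. The only difference is cosmetic --- you take $(X_t,V_t)$ where the paper takes $(X_t,\Sigma_t)$, the two being related by the linear bijection $V_t=X_t-\theta\Sigma_t$ --- and your treatment of the degenerate-noise minorization via H\"ormander's condition is in fact more careful than the paper's.
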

\begin{proof}
It follows from \eqref{MOD} that
\begin{equation}
\label{AR2}
\dd X_{t} = (\theta + \rho) X_{t} \dd t - \theta \rho \Sigma_{t} \dd t + \dd W_{t}
\end{equation}
where we recall that
\begin{equation*}
\Sigma_{t} = \int_{0}^{t} X_{s} \dd s.
\end{equation*}
Consequently, if
\begin{equation*}
\Phi_{t} =
\begin{pmatrix}
X_{t} \\
\Sigma_{t}
\end{pmatrix},
\end{equation*}
we clearly deduce from \eqref{AR2} that
\begin{equation*}
\dd \Phi_{t} = A \Phi_{t} \dd t + \dd B_{t}
\end{equation*}
where
\begin{equation*}
 A =
\begin{pmatrix}
\theta+\rho & -\theta \rho \\
1 & 0
\end{pmatrix} \hspace{1cm} \text{and} \hspace{1cm} B_{t} =
\begin{pmatrix}
W_{t} \\
0
\end{pmatrix}.
\end{equation*}

\medskip

\noindent The geometric ergodicity of $(\Phi_{t})$ only depends on the sign of $\lambda_{\text{max}}(A)$, \textit{i.e.} the largest eigenvalue of $A$, which has to be negative. An immediate calculation shows that
\begin{equation*}
\lambda_{\text{max}}(A) = \max(\theta,\, \rho)
\end{equation*}
which ensures that $\lambda_{\text{max}}(A) < 0$ as soon as $\rho<0$. Moreover, if $\rho=0$, $(X_{t})$ is an ergodic Ornstein-Uhlenbeck process since 
$\theta < 0$, which completes the proof of Lemma \ref{LERGO}.
\end{proof}

\begin{cor}
\label{COR-LIMX2}
We have the almost sure convergence
\begin{equation}
\label{CVGXX}
\limT \frac{1}{T} S_T = -\frac{1}{2 (\theta + \rho)} \hspace{0.5cm} \textnormal{\cvgps}
\end{equation}
\end{cor}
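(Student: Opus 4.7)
The plan is to combine the geometric ergodicity established in Lemma \ref{LERGO} with an ergodic theorem for time-averages of the bivariate diffusion $\Phi_{t} = (X_{t},\Sigma_{t})^{\prime}$. When $\rho<0$, Lemma \ref{LERGO} guarantees that $\Phi_{t}$ admits a unique invariant Gaussian measure $\mu$ on $\dR^{2}$ and converges to $\mu$ geometrically fast from any initial condition, in particular from $\Phi_{0} = 0$. Applying the pointwise ergodic theorem for positive Harris recurrent diffusions to the map $f(x_{1},x_{2}) = x_{1}^{2}$, which has polynomial growth and is $\mu$-integrable because $\mu$ has Gaussian tails, yields
\begin{equation*}
\frac{1}{T}\, S_{T} \: = \: \frac{1}{T} \int_{0}^{T} f(\Phi_{t}) \dd t \: \longrightarrow \: \int f \dd \mu \: = \: \dE_{\mu}[X^{2}] \hspace{0.5cm} \cvgps
\end{equation*}

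It therefore remains to identify the stationary variance $\dE_{\mu}[X^{2}]$. Since $\Phi$ solves the linear SDE $\dd \Phi_{t} = A \Phi_{t} \dd t + \dd B_{t}$ with $A$ as in the proof of Lemma \ref{LERGO} and $B_{t} = (W_{t},0)^{\prime}$, the covariance matrix $\Gamma = \bigl( \begin{smallmatrix} a & b \\ b & c \end{smallmatrix} \bigr)$ of $\mu$ is the unique solution of the continuous-time Lyapunov equation
\begin{equation*}
A\, \Gamma + \Gamma\, A^{\prime} + Q = 0, \hspace{0.5cm} Q = \begin{pmatrix} 1 & 0 \\ 0 & 0 \end{pmatrix}.
\end{equation*}
Expanding entry by entry produces three scalar equations: the $(2,2)$ entry forces $b = 0$, after which the $(1,1)$ entry immediately gives $a = \dE_{\mu}[X^{2}] = -1/(2(\theta+\rho))$, which is precisely the claimed limit.

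The case $\rho = 0$ is classical: the system \eqref{MOD} collapses to $V_{t} = W_{t}$ and $\dd X_{t} = \theta X_{t} \dd t + \dd W_{t}$, so $(X_{t})$ is a stable Ornstein-Uhlenbeck process with invariant distribution $\cN(0,-1/(2\theta))$, and the standard OU ergodic theorem delivers $S_{T}/T \to -1/(2\theta)$ a.s., which agrees with \eqref{CVGXX} since $\theta + \rho = \theta$ here. The only slightly technical point is upgrading the total-variation convergence of Lemma \ref{LERGO} into a pointwise time-average statement for the unbounded test function $x^{2}$; this is routine for non-degenerate linear Gaussian diffusions with stable drift via an $f$-norm ergodic theorem applied with $f(x) = 1 + x^{2}$.
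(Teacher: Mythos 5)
Your proof is correct and follows essentially the same route as the paper: geometric ergodicity from Lemma \ref{LERGO} plus an ergodic theorem for the pair $(X_{t},\Sigma_{t})$, followed by identification of the stationary second moment, with the $\rho=0$ case handled separately as a classical Ornstein-Uhlenbeck fact. The only cosmetic difference is that you obtain $\dE_{\mu}[X^{2}]=-1/(2(\theta+\rho))$ by solving the stationary Lyapunov equation $A\Gamma+\Gamma A^{\prime}+Q=0$ directly, whereas the paper derives the transient moment ODE $\partial_{t}U_{t}=CU_{t}+I$ for $(\dE[X_{t}^{2}],\dE[\Sigma_{t}^{2}],\dE[X_{t}\Sigma_{t}])$ and passes to the limit $U_{t}\to-C^{-1}I$; these are equivalent computations.
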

\begin{proof}
According to Lemma \ref{LERGO}, it is only necessary to establish the
asymptotic behavior of $\dE[X_{t}^{\, 2}]$. Denote 
$\alpha_t = \dE[X_{t}^{\, 2}]$, $\beta_t = \dE[\Sigma_{t}^{\, 2}]$ and $\gamma_t = \dE[X_{t} \Sigma_{t}]$. 
One obtains from It\^o's formula that
\begin{equation*}
\frac{\partial U_{t}}{\partial t} = C U_{t} + I
\end{equation*}
where
\begin{equation*}
U_{t} =
\begin{pmatrix}
\alpha_{t} \\
\beta_{t} \\
\gamma_{t}
\end{pmatrix}, \hsp C =
\begin{pmatrix}
2(\theta+\rho) & 0 & -2 \theta \rho \\
0 & 0 & 2 \\
1 & - \theta \rho & \theta + \rho
\end{pmatrix}, \hsp I =
\begin{pmatrix}
1 \\
0 \\
0
\end{pmatrix}.
\end{equation*}
It is not hard to see that $\lambda_{\text{max}}(C) = \max(\theta+\rho,\, 2\theta,\, 2\rho)$.
On the one hand, if $\rho< 0$, $\lambda_{\text{max}}(C) <0$ which implies that
\begin{equation*}
\lim_{t \rightarrow \, \infty} U_{t} = -C^{-1} I.
\end{equation*}
It means that
\begin{equation*}
\lim_{t \rightarrow \, \infty} \alpha_{t} = -\frac{1}{2(\theta + \rho)}, \hspace{0.5cm} \lim_{t \rightarrow \, \infty} \beta_{t} = -\frac{1}{2 \theta \rho (\theta + \rho)}, \hspace{0.5cm} \lim_{t \rightarrow \, \infty} \gamma_{t} = 0.
\end{equation*}
Hence, \eqref{CVGXX} follows from Lemma \ref{LERGO} together with the ergodic theorem.
On the other hand, if $\rho=0$, $(X_{t})$ is a positive recurrent Ornstein-Uhlenbeck process and convergence \eqref{CVGXX} is well-known.
\end{proof}

\begin{cor}
\label{COR-LIMVC2}
If $\rho < 0$, we have the almost sure convergence
\begin{equation*}
\limT \frac{1}{T} \wh{L}_T = -\frac{(\theta + \rho)^2 + \theta \rho}{2 \theta \rho (\theta + \rho)} \hspace{0.5cm}  \textnormal{\cvgps}
\end{equation*}
\end{cor}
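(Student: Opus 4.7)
The plan is to decompose $\wh{L}_T$ around the true noise $V_t$ and then invoke the ergodic theorem supplied by Lemma \ref{LERGO}. Since $X_t = \theta \Sigma_t + V_t$, the residual can be rewritten as $\whv_t = V_t - (\estt - \theta) \Sigma_t$, so squaring and integrating yields
\begin{equation*}
\wh{L}_T = L_T - 2 ( \estt - \theta ) \int_0^T V_t \Sigma_t \dd t + ( \estt - \theta )^2 \int_0^T \Sigma_t^2 \dd t.
\end{equation*}
By Theorem \ref{T-ASCVG} the difference $\estt - \theta$ converges almost surely to $\rho$, and \eqref{CVGL} gives $T^{-1} L_T \to -1/(2 \rho)$. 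So the crux is to identify, when $\rho < 0$, the almost sure Ces\`aro limits of $\Sigma_t^2$ and $V_t \Sigma_t$.

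For this I would apply the ergodic theorem to the enlarged Markov process $\Phi_t = (X_t, \Sigma_t)$, which Lemma \ref{LERGO} makes geometrically ergodic as soon as $\rho < 0$. The second-moment limits of its stationary distribution were already read off in the proof of Corollary \ref{COR-LIMX2} as the coordinates of $-C^{-1} I$, giving in particular $\dE[\Sigma_t^2] \to -1/(2 \theta \rho (\theta + \rho))$ and $\dE[X_t \Sigma_t] \to 0$. The identity $V_t = X_t - \theta \Sigma_t$ then produces $\dE[V_t \Sigma_t] \to 1/(2 \rho (\theta + \rho))$, and the continuous-time ergodic theorem transfers both expectations to a.s. time averages.

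Substituting the three limits into the decomposition and reducing over the common denominator $2 \theta \rho (\theta + \rho)$ collapses the numerator to $-(\theta^2 + 3 \theta \rho + \rho^2) = -((\theta + \rho)^2 + \theta \rho)$, which is exactly the claimed limit. As a by-product, applying the same ergodic argument to $X_t V_t = X_t^2 - \theta X_t \Sigma_t$ yields $T^{-1} P_T \to -1/(2(\theta + \rho))$, which is the convergence \eqref{CVGP} promised earlier during the proof of Theorem \ref{T-ASCVG}.

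The only real obstacle is justifying the ergodic-theorem step for the unbounded observables $\Sigma_t^2$ and $V_t \Sigma_t$: one has to check that the invariant law of $\Phi_t$ has finite second moments and that a suitable $L^2$ continuous-time ergodic theorem applies. Both are automatic from the exponential stability of the matrix $A$ in Lemma \ref{LERGO} and the Gaussianity of the stationary distribution, so beyond this sanity check the remainder is just a rearrangement of quantities already at hand.
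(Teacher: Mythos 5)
Your proof is correct, but it takes a genuinely different route from the paper's. You expand $\whv_t = V_t - (\estt-\theta)\Sigma_t$ and therefore need the almost sure Ces\`aro limits of $\Sigma_t^{2}$ and $V_t\Sigma_t$, which you correctly read off from the stationary second moments $-C^{-1}I$ computed in the proof of Corollary \ref{COR-LIMX2}; the paper instead rewrites $\whv_t$ as a linear combination of $X_t$, $V_t$ and $(\estt-\theta^{*})$, arriving at the decomposition \eqref{DECLHAT}, $\wh{L}_{T} = I_{T} + (\estt-\theta^{*})(J_{T} + (\estt-\theta^{*})K_{T})$, in which everything is expressed through $S_T$, $P_T$ and $L_T$ alone, so that the limit collapses to $\lim T^{-1}I_T$. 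The two computations are algebraically equivalent — your reduction $\theta^{2}+3\theta\rho+\rho^{2}=(\theta+\rho)^{2}+\theta\rho$ checks out, as does your value $\dE[V_t\Sigma_t]\to 1/(2\rho(\theta+\rho))$ — and your derivation of \eqref{CVGP} via $\dE[X_t\Sigma_t]\to 0$ is exactly the paper's. What the paper's version buys is that \eqref{DECLHAT} is reused verbatim in the proofs of Theorems \ref{T-AN} and \ref{T-AN0}, whereas your decomposition is marginally more direct for the corollary in isolation; your appeal to the ergodic theorem for the unbounded observables is at the same level of rigor as the paper's own treatment of $S_T$. One presentational caveat: you invoke $\estt\to\theta^{*}$ from Theorem \ref{T-ASCVG}, whose proof in turn cites \eqref{CVGP} and this very corollary; the circle is broken (here as in the paper) because the ergodic limits of $S_T$, $P_T$, $L_T$ follow from Lemma \ref{LERGO} alone, so the first half of \eqref{ASCVGTR} is available before the corollary is needed — but it would be cleaner to state that ordering explicitly rather than listing \eqref{CVGP} as a by-product at the end.
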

\begin{proof}
If $\rho < 0$, $(V_{t})$ is a positive recurrent Ornstein-Uhlenbeck process and it is well-known that
\begin{equation*}
\limT \frac{1}{T}L_{T} = -\frac{1}{2 \rho} \hspace{0.5cm} \cvgps
\end{equation*}
In addition, as $X_t= \theta \Sigma_t + V_t$, 
\begin{equation*}
\int_{0}^{T} X_{t} \Sigma_{t} \dd t= \frac{1}{\theta} ( S_T - P_T ).
\end{equation*}
However, we already saw in the proof of Corollary \ref{COR-LIMX2} that
\begin{equation*}
\limT \frac{1}{T} \int_{0}^{T} X_{t} \Sigma_{t} \dd t = 0 \hspace{0.5cm} \cvgps
\end{equation*}
which leads, via \eqref{CVGXX}, to the almost sure convergence
\begin{equation*}
\limT \frac{P_{T}}{T} = -\frac{1}{2 (\theta + \rho)} \hspace{0.5cm} \cvgps
\end{equation*}
Consequently, we deduce from \eqref{ASCVGTR} together with \eqref{DECLHAT} that
\begin{equation*}
\lim_{T \to \infty} \frac{1}{T} \wh{L}_T  =  \lim_{T \to \infty} \frac{1}{T} I_T= -\frac{(\theta + \rho)^2 + \theta \rho}{2 \theta \rho (\theta+\rho)} 
\hspace{0.5cm} \cvgps
\end{equation*}
which achieves the proof of Corollary \ref{COR-LIMVC2}.
\end{proof}

\begin{cor}
\label{COR-LIMSTAT}
If $\rho < 0$, we have the asymptotic normalities
\begin{equation*}
X_{T} \liml \cN\left( 0, -\frac{1}{2 (\theta + \rho)} \right) \hspace{1cm}\text{and} \hspace{1cm} V_{T} \liml \cN\left( 0, -\frac{1}{2 \rho} \right).
\end{equation*}
The asymptotic normality of $X_{T}$ still holds in the particular case where $\rho = 0$.
\end{cor}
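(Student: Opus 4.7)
The plan is to exploit the fact that both $(X_t)$ and $(V_t)$ are centred Gaussian processes, so convergence in distribution of $X_T$ and $V_T$ reduces to the convergence of their variances. Indeed, \eqref{MOD} is a linear SDE driven by the Brownian motion $(W_t)$ with deterministic zero initial data, so each coordinate is a linear functional of $W$ and therefore Gaussian with mean zero.

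First, I would handle $V_T$. Reading the second line of \eqref{MOD} in isolation gives the classical OU representation $V_t = \int_0^t e^{\rho(t-s)} \dd W_s$, from which $\dE[V_T^{\,2}] = (1 - e^{2\rho T})/(-2\rho) \to -1/(2\rho)$ when $\rho<0$. Combined with the centred Gaussianity of $V_T$, this directly yields $V_T \liml \cN(0, -1/(2\rho))$.

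For $X_T$ when $\rho<0$, I would reuse the second-moment analysis already carried out in the proof of Corollary \ref{COR-LIMX2}: the vector $U_t = (\alpha_t, \beta_t, \gamma_t)^{\prime}$ satisfies $\partial_t U_t = C U_t + I$ with $\lambda_{\text{max}}(C) < 0$, so that $U_t \to -C^{-1}I$, and in particular $\alpha_T = \dE[X_T^{\,2}] \to -1/(2(\theta+\rho))$. When $\rho = 0$, the process $(X_t)$ reduces to a standard positive recurrent Ornstein--Uhlenbeck process whose stationary variance is $-1/(2\theta) = -1/(2(\theta+\rho))$, and the same conclusion holds.

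The final step is the routine observation that a sequence of centred Gaussian variables whose variances converge to $\sigma^2 > 0$ converges in distribution to $\cN(0, \sigma^2)$, which is immediate from characteristic functions. The only mildly subtle point, and what I would flag as the main obstacle, is making sure that $X_T$ is genuinely Gaussian despite the coupled structure of \eqref{MOD}; once the linearity of the system in the driving Brownian motion $(W_t)$ is observed, all remaining ingredients have already been established earlier in the paper.
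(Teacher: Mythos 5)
Your proposal is correct and follows essentially the same route as the paper: both arguments rest on the centred Gaussianity of $(X_t)$ and $(V_t)$ (so that convergence in law reduces to convergence of variances), on the classical result for the Ornstein--Uhlenbeck process $(V_t)$, and on the limit $\dE[X_T^{\,2}] \to -1/(2(\theta+\rho))$ already obtained in the proof of Corollary \ref{COR-LIMX2}. Your version merely spells out the variance computation for $V_T$ and the characteristic-function step that the paper leaves implicit.
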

\begin{proof}
This asymptotic normality is a well-known result for the Ornstein-Uhlenbeck process $(V_{t})$ with $\rho < 0$. In addition, one can observe that
for all $t \geq 0$, $\dE[X_{t}] = 0$. The end of the proof is a direct consequence of the Gaussianity of $(X_{t})$ together with Lemma \ref{LERGO} 
and Corollary \ref{COR-LIMX2}.
\end{proof}

\bigskip

%%%%%%%%%%%%%%%%%%%%%%%%%%%%%%%%%%%%%%%%%%%%%%%%%%%%%%%%%%%%%%%%%%%%%%%%%%%%%%%%%%%%%%%%%%%%

\section{A STATISTICAL TESTING PROCEDURE}

%%%%%%%%%%%%%%%%%%%%%%%%%%%%%%%%%%%%%%%%%%%%%%%%%%%%%%%%%%%%%%%%%%%%%%%%%%%%%%%%%%%%%%%%%%%%
\label{SC-DW}

Our purpose is now to propose a statistical procedure in order to test
\begin{equation*}
\cH_0\,:\,`` \rho = 0" \hspace{1cm} \text{against} \hspace{1cm} \cH_1\,:\,`` \rho < 0".
\end{equation*}
We shall make use of the Durbin-Watson statistic given by \eqref{DEFDW}. Its asymptotic properties are as follows.

\begin{thm}
\label{T-DW}
We have the almost sure convergence
\begin{equation}
\label{ASCVGDW}
\limT \estdw = D^{*} \hspace{0.5cm}\textnormal{\cvgps}
\end{equation}
where $D^{*} = 2\left( 1 - \rho^{*} \right)$. In addition, if $\rho < 0$, we have the asymptotic normality
\begin{equation}
\label{CLTDW1}
\sqrt{T} \left( \estdw - D^{*} \right) \liml \cN(0, \sigma^2_{D})
\end{equation}
where 
$$\sigma^2_{D} = 4 \, \sigma^2_{\rho}= -\frac{8 \rho^{*} \left( (\theta^{*})^6 + \theta \rho \left( (\theta^{*})^4 - \theta \rho \left( 2 (\theta^{*})^2 - \theta \rho \right) \right) \right) }{\left( (\theta^{*})^2 + \theta \rho \right)^3}.  $$
while, if $\rho=0$,
\begin{equation}
\label{CLTW2}
T \left( \estdw - 2 \right) \liml -2 \cW
\end{equation}
with $\cW$ given by \eqref{DEFW}.
\end{thm}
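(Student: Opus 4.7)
The proof will rest entirely on the linear identity $\estdw = 2(1 - \estr)$ already noted right after the definition \eqref{DEFDW}. All three statements are then transcriptions of the previously established results for $\estr$, with a constant factor $-2$ applied.

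For the almost sure convergence \eqref{ASCVGDW}, I would simply invoke the second convergence in \eqref{ASCVGTR} of Theorem \ref{T-ASCVG}, namely $\estr \to \rho^{*}$ a.s., to conclude that
$$
\estdw = 2(1 - \estr) \; \longrightarrow \; 2(1 - \rho^{*}) = D^{*} \hspace{0.5cm} \cvgps
$$
For the Gaussian fluctuations \eqref{CLTDW1} in the case $\rho < 0$, the identity $\estdw - D^{*} = -2(\estr - \rho^{*})$ combined with \eqref{CLTR} of Theorem \ref{T-AN} yields
$$
\sqrt{T} \bigl(\estdw - D^{*}\bigr) = -2\sqrt{T}\bigl(\estr - \rho^{*}\bigr) \; \liml \; \cN(0, 4 \sigma_{\rho}^2),
$$
and $\sigma_{D}^2 = 4 \sigma_{\rho}^2$ is exactly the stated variance.

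For the degenerate regime $\rho = 0$, one first observes that the explicit formula \eqref{LIMTR} for $\rho^{*}$ yields $\rho^{*} = 0$, so $D^{*} = 2$. Then $T(\estdw - 2) = -2T\estr$, and Theorem \ref{T-AN0} gives $T \estr \liml \cW$, whence $T(\estdw - 2) \liml -2\cW$ by the continuous mapping theorem, which is \eqref{CLTW2}.

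There is no real obstacle here: the theorem is essentially a corollary of Theorems \ref{T-ASCVG}, \ref{T-AN} and \ref{T-AN0}, the only point to keep in mind being that $\rho^{*}$ vanishes with $\rho$ so that the centering in \eqref{CLTW2} is indeed $D^{*} = 2$ as the reader would expect.
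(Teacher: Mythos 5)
Your proof is correct and follows exactly the paper's own argument: the paper likewise derives all three statements directly from the identity $\estdw = 2(1-\estr)$ together with \eqref{ASCVGTR}, \eqref{CLTR} and \eqref{CVGRH0}. Your additional remark that $\rho^{*}=0$ when $\rho=0$, so that $D^{*}=2$, is a small but welcome clarification the paper leaves implicit.
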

\begin{proof}
The proof of Theorem \ref{T-DW} is a straightforward application of \eqref{ASCVGTR}, \eqref{CLTR} and \eqref{CVGRH0} since 
$\estdw = 2\left(1 - \estr \right)$. 
\end{proof}

\noindent From now on, let us define the test statistic
\begin{equation*}
\estz = T^{\, 2} \left( \estdw - 2 \right)^{2}.
\end{equation*}
It follows from Theorem \ref{T-DW} that under $\cH_0$,
\begin{equation*}
\estz \liml 4 \cW^{\, 2}
\end{equation*}
while, under $\cH_1$,
\begin{equation*}
\limT \estz = +\infty \hspace{0.5cm} \cvgps
\end{equation*}
From a practical point of view, for a significance level $\alpha$ where $0< \alpha < 1$, the acceptance and rejection regions are given by 
$\cA = [0, z_{\alpha}]$ and $\cR = ]z_{\alpha}, +\infty[$
where $z_{\alpha}$ stands for the $(1-\alpha)$-quantile of the distribution of $4 \cW^{\, 2}$. The null hypothesis $\cH_0$ will not be rejected if the empirical value
\begin{equation*}
\estz \leq z_{\alpha},
\end{equation*}
and will be rejected otherwise. Assume to conclude that $\cH_0$ is rejected, which means that we admit the existence of a serial correlation 
$\rho < 0$. Then, the best way to produce unbiased estimates is to study the process given by \eqref{AR2}. As a matter of fact, for all $ t \geq 0$,
\begin{equation*}
X_{t} = (\theta + \rho) \Sigma_{t} - \theta \rho \Pi_{t} + W_{t}
\end{equation*}
where
\begin{equation*}
\Sigma_{t} = \int_{0}^{t} X_{s} \dd s \hsp \hsp \text{and} \hsp \hsp \Pi_{t} = \int_{0}^{t} \Sigma_{s} \dd s.
\end{equation*}
The maximum likelihood estimator of the vector 
$$
\vartheta=\begin{pmatrix} \theta+\rho \\ -\theta\rho \end{pmatrix}
$$ is given by
\begin{equation*}
\wh{\vartheta}_T = \left( \int_{0}^{T} \Phi_{t} \Phi_{t}^{\, \prime} \dd t \right)^{\! -1} \int_{0}^{T} \Phi_{t} \dd X_{t}
\end{equation*}
where $\Phi_t=
\begin{pmatrix}
X_{t} \
\Sigma_{t}
\end{pmatrix}^{\prime}
$.
We can show the almost sure convergence
\begin{equation*}
\limT \wh{\vartheta}_T  = \vartheta \hsp \cvgps
\end{equation*}
as well as the asymptotic normality
\begin{equation*}
\sqrt{T} \left( \wh{\vartheta}_T  - \vartheta \right) \liml \cN(0, \Delta)
\end{equation*}
where the asymptotic covariance matrix is given by
\begin{equation*}
\Delta = \begin{pmatrix}
-2 \theta^{*} & 0 \\
0 & -2 \theta\rho \, \theta^{*}
\end{pmatrix}.
\end{equation*}
Accordingly, the maximum likelihood estimator $\wh{\vartheta}_T$ is strongly consistent and one can see that its components 
are asymptotically independent.

\bigskip

\bibliographystyle{acm}
\bibliography{OUDW}

\end{document}